\documentclass[11pt]{article}
\usepackage[a4paper,margin=28mm]{geometry}
\usepackage{amsmath,amssymb,amsthm,mathtools}
\usepackage{enumitem}
\usepackage{microtype}
\usepackage[hidelinks]{hyperref}
\usepackage{booktabs}
\usepackage{url}

\newtheorem{theorem}{Theorem}[section]
\newtheorem{lemma}[theorem]{Lemma}

\theoremstyle{definition}
\newtheorem{definition}[theorem]{Definition}

\theoremstyle{remark}
\newtheorem{remark}[theorem]{Remark}

\newcommand{\ceil}[1]{\left\lceil #1\right\rceil}
\newcommand{\floor}[1]{\left\lfloor #1\right\rfloor}

\title{Iterated Distinct Absolute Differences of Integer Compositions}
\author{Felix Huber\thanks{ORCID: \href{https://orcid.org/0009-0005-1568-1579}{0009-0005-1568-1579}}}
\date{August 2026}

\begin{document}
\maketitle

\noindent\textbf{2020 Mathematics Subject Classification.} 05A05, 05A15, 05C78.

\noindent\textbf{Keywords.} Integer compositions, consecutive absolute differences, graceful permutations, extremal enumeration, Walecki construction.

\begin{abstract}
Starting from an integer composition, form its consecutive absolute differences, provided that they are nonzero and pairwise distinct, and then permute these differences arbitrarily before repeating the operation. The depth of the composition is the maximum possible number of successive iterations. We determine the least integer admitting a composition of any prescribed depth. If $a(n)$ is the least positive integer having a composition of depth $n$, then
\[
a(n)=n+1+\ceil{\frac{n(n+1)}4}+\floor{\frac n2}.
\]
We also prove that every integer $k\ge a(n)$ has a composition of depth at least $n$. Consequently, if $d(k)$ denotes the maximum depth of a composition of $k$, then
\[
d(k)=\max\{n\ge0:a(n)\le k\}.
\]
Finally, we classify and enumerate all compositions attaining the minimum $a(n)$. Their number is given by four factorial formulas according to $n$ modulo $4$.
\end{abstract}

\section{Introduction}

For a finite sequence $x=(x_0,\ldots,x_m)$, its consecutive absolute differences are
\[
\Delta(x)=(|x_1-x_0|,\ldots,|x_m-x_{m-1}|).
\]
The classical theory of graceful permutations asks for permutations whose consecutive absolute differences are all distinct; equivalently, these are graceful labelings of paths. Graceful labeling was introduced by Rosa~\cite{Rosa1967}; for a broad survey see Gallian~\cite{Gallian2025}. Graceful permutations and their enumeration have been studied, for example, by Adamaszek~\cite{Adamaszek2013}. The alternating Walecki arrangement is a standard construction producing all differences $1,\ldots,m-1$ from an ordering of $1,\ldots,m$; see also the discussion in~\cite{Ollis2020}.

Here we study a different extremal problem. The entries of the starting sequence are the positive parts of a composition, repeated values are allowed, and after each difference step the resulting distinct positive values may be reordered arbitrarily. We ask how small the sum can be if a prescribed number of iterations is required.

Our first result gives an exact quadratic quasipolynomial for this minimum. The second shows that the attainable sums have no gaps: once depth $n$ becomes possible, it remains possible for every larger sum. The third classifies all minimizers and gives exact factorial enumeration formulas.

\section{Definitions and main results}

\begin{definition}
A \emph{composition of $k$} is a finite sequence of positive integers whose sum is $k$. Let
\[
C_0=(c_0,\ldots,c_m)
\]
be a composition. One iteration is allowed if the $m$ numbers
\[
|c_1-c_0|,\ldots,|c_m-c_{m-1}|
\]
are nonzero and pairwise distinct. In that case they may be arranged in any order to form $C_1$. The procedure is then repeated. Formally, the \emph{depth} of $C_0$ is
\[
\operatorname{depth}(C_0)=\max\{t\ge0:\text{there exist valid }C_0,C_1,\ldots,C_t\}.
\]
\end{definition}

Every iteration reduces the length by one, so a composition with $m+1$ parts has depth at most $m$.

Let $a(n)$ be the least positive integer $k$ for which some composition of $k$ has depth $n$, and let $d(k)$ be the maximum depth of a composition of $k$.

Define
\[
M(n)=\ceil{\frac{n(n+1)}4}+\floor{\frac n2}.
\]

\begin{theorem}[Minimum sum]\label{thm:min}
For every $n\ge0$,
\[
\boxed{a(n)=n+1+M(n)
=n+1+\ceil{\frac{n(n+1)}4}+\floor{\frac n2}.}
\]
Equivalently,
\[
\begin{aligned}
a(4q)&=4q^2+7q+1,\\
a(4q+1)&=4q^2+9q+3,\\
a(4q+2)&=4q^2+11q+6,\\
a(4q+3)&=4q^2+13q+8.
\end{aligned}
\]
\end{theorem}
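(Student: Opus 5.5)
The plan is to prove the two inequalities $a(n)\ge n+1+M(n)$ and $a(n)\le n+1+M(n)$ separately. The cases $n\le 3$ I would check by hand. The closed forms in the four residue classes are then routine algebra: put $n=4q+r$ and simplify $\ceil{n(n+1)/4}+\floor{n/2}$.

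\emph{Lower bound.} Let $C_0=(c_0,\dots,c_n)$ have depth $n$, so it has at least $n+1$ parts; I would first reduce to exactly $n+1$ parts, since deleting parts from the end keeps the depth at least $n$ and lowers the sum. Write $b_i=c_i-1\ge0$. Then $\sum c_i=n+1+\sum b_i$, and it suffices to show $\sum b_i\ge M(n)$. The differences $d_i=|b_i-b_{i-1}|$ are $n$ distinct positive integers, so $\sum d_i\ge n(n+1)/2$. The identity $|x-y|=x+y-2\min(x,y)$, summed along the path, gives
\[
2\sum_i b_i=\sum_i d_i+b_0+b_n+2\sum_{i=1}^n\min(b_{i-1},b_i).
\]
This equation also fixes the parity of $\sum d_i+b_0+b_n$, and that parity is what produces the ceiling in $\ceil{n(n+1)/4}$. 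The term $\floor{n/2}$ must come from the correction $b_0+b_n+2\sum\min(b_{i-1},b_i)$. Here I would use that an edge with $\min=0$ is incident to a vertex with $b_i=0$. Two zeros cannot be adjacent, because that gives difference $0$. A nonzero interior vertex cannot have zeros on both sides, because it would produce the same difference twice.

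\emph{Main obstacle.} These local facts only force about a third of the edges to have $\min\ge1$, so the crude bound is too weak. I would therefore trade the correction term against the excess $E=\sum d_i-n(n+1)/2$. Each configuration in which many edges have minimum $0$ forces the difference set to skip small values, and I would aim to prove $E+b_0+b_n+2\sum\min\ge 2\floor{n/2}$, up to the parity adjustment. The first attempt would be a charging argument on maximal runs between consecutive zeros. In a run $0,x_1,\dots,x_r,0$ the endpoint differences $x_1,x_r$ are distinct values, and the interior edges contribute $\min\ge1$. I would check whether each run "pays" at least one unit per two edges. If this fails, the fallback is to use the deeper structure: $C_1$ is itself a set of distinct values that must admit a graceful-type ordering of depth $n-1$. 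I would try to show this prevents the difference set from being $\{1,\dots,n\}$ at the same time as a zero-heavy $b$.

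\emph{Upper bound.} For each residue of $n$ modulo $4$ I would give an explicit composition of sum $n+1+M(n)$. The idea is to choose $b$ with exactly the equality pattern above. Its differences should be $\{1,\dots,n\}$, or that set with one value raised by $1$ to fix parity, arranged so the minima total $\floor{n/2}$ divided suitably. A natural shape is alternating zero and small values with large values interleaved, in the style of the Walecki zigzag. For the later iterations I only need the difference multiset $\{1,\dots,n\}$, or its parity-adjusted variant, to be reorderable into a graceful permutation, which the alternating Walecki arrangement provides. From then on each step can again be chosen as a Walecki ordering of $\{1,\dots,m\}$, since its differences are $\{1,\dots,m-1\}$. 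That yields depth $n$ and finishes the theorem.
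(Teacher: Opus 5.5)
\textbf{There is a genuine gap: the core inequality of the lower bound is never established.} Your identity $2\sum b_i=\sum d_i+b_0+b_n+2\sum\min(b_{i-1},b_i)$ is exactly the paper's starting point. But the proof's real content is the bound $B:=b_0+b_n+2\sum_i\min(b_{i-1},b_i)\ge 2\floor{n/2}$, after normalizing so that $\min_i b_i=0$, and you do not prove it.

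The run-by-run charging you propose fails as stated. The run $0,1,z,0$ with $z\ge3$ is admissible, has three edges, and contributes only $2$ to $B$. It even occurs in the extremal sequences for $n\equiv3\pmod4$. The missing idea is global and uses that the difference $1$ appears only once:
\begin{itemize}
\item A three-edge inner run $0,z_1,z_2,0$ contributes $2\min(z_1,z_2)$. This is at least $4$ unless $\min(z_1,z_2)=1$, and in that case the run contains an edge of difference $1$. So at most one run is deficient, and only by one.
\item Inner runs with $r\ge4$ edges contribute at least $2(r-2)\ge r$.
\item End runs $z_0,\dots,z_{r-1},0$ contribute at least $2r-1\ge r$.
\end{itemize}
This gives $B\ge n-1$, which suffices for odd $n$. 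For even $n$ you must still exclude $B=n-1$. Equality forces one deficient run and all other inner runs to have four edges. Parity of the edge count then forces exactly one one-edge end run, which must be $(1,0)$ and again uses difference $1$, a contradiction. No trade-off with the excess $E$ is needed. Integrality of $\sum b_i=(S+B)/2$, with $S=\sum d_i$, then supplies the ceiling.

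\textbf{Your fallback cannot work.} Later iterations impose no constraint at all. Any distinct positive integers $a_1<\dots<a_k$, ordered as $a_1,a_k,a_2,a_{k-1},\dots$, have strictly decreasing consecutive differences. So a composition whose first differences are distinct and nonzero always has depth equal to its number of parts minus one. This same fact is what justifies your unproved claim that deleting final parts keeps the depth at least $n$. Alternatively, do as the paper does: prove the bound $m+1+M(m)$ for $m+1$ parts and use that it increases with $m$.

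\textbf{The upper bound is only described, not constructed.} You need explicit normalized sequences in each residue class whose differences are exactly $\{1,\dots,n\}$ and whose sum is $M(n)$. For example, $(0,1,4,0,6,1,8,0,\dots,n,0,2)$ works for $n=4q$. No parity-adjusted difference set is required.
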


\begin{theorem}[No gaps]\label{thm:nogaps}
For every $n\ge0$ and every $k\ge a(n)$, there exists a composition of $k$ having depth at least $n$. Consequently,
\[
\boxed{d(k)\ge n\iff k\ge a(n)}
\]
and
\[
\boxed{d(k)=\max\{n\ge0:a(n)\le k\}.}
\]
In particular, $d(k)$ is nondecreasing.
\end{theorem}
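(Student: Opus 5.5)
Given Theorem~\ref{thm:min}, the boxed equivalences follow formally from the first sentence. If $d(k)\ge n$, then some composition of $k$ has depth at least $n$. Truncating its chain $C_0,\dots,C_t$ at step $n$ does not directly give depth exactly $n$, so I would argue $a(n)\le k$ differently. Running Theorem~\ref{thm:min}'s lower-bound argument for compositions of depth \emph{at least} $n$ gives $k\ge n+1+M(n)$, since that argument only uses the existence of $n$ valid iterations. Alternatively, one can check that the minimum over ``depth $\ge n$'' equals the minimum over ``depth $=n$'' because $a$ is increasing. Conversely, $k\ge a(n)$ gives $d(k)\ge n$ by the first sentence. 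The formula $d(k)=\max\{n:a(n)\le k\}$ and monotonicity are then immediate, since $a(n)$ is strictly increasing, as the explicit quasipolynomials show.

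So the real content is the construction. For each $k\ge a(n)$ we need a composition of $k$ with at least $n$ valid iterations. The plan is to start from a minimizer for $a(n)$, which exists by Theorem~\ref{thm:min}, and to \emph{pad} it by adding the excess $e=k-a(n)\ge0$ without destroying validity. The cleanest padding: the chain of a depth-$n$ composition $C_0,\dots,C_n$ has $C_n$ of length one and $C_{n-1}$ of length two. Work backwards. Choose a final value and rebuild each level $C_{j}$ from $C_{j+1}$ as a ``graceful-type'' arrangement, i.e.\ a sequence with prescribed distinct consecutive differences. The freedom is in the choice of the starting value at each level. Adding a constant $s$ to every entry of $C_0$ preserves all differences, hence preserves the whole chain, and increases the sum by $s(n+1)$. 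That only covers residues of $e$ modulo $n+1$, so I also need finer adjustments.

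For the fine adjustment, I would take the largest part of $C_0$, sitting at an end (arrange this via the Walecki-type structure of minimizers), and increase it by $r$ with $0\le r\le n$. This changes exactly one difference in $C_1$, the end one, by $r$. To keep $C_1$ valid, this difference should be the largest entry of $C_1$ and lie at an end of $C_1$'s chosen ordering. Then increasing it changes only the end difference of $C_2$, and so on down the chain. So I want a minimizer, or near-minimizer, in which at every level the maximum sits at an end and is adjacent to an element smaller than it. Then raising the maximum by $r$ keeps all differences distinct, because the raised one stays strictly the largest, and nonzero. The concrete choice: reorder each $C_j$ (we are free to permute) so that its maximum is at the right end, preceded by its minimum. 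The raised difference is then max minus min plus $r$, still the unique maximum of the next level. The level-by-level propagation is consistent, since each $C_{j+1}$ may be permuted freely before differencing. Combining the shift by $s$ with the raise by $r$ gives every $e=s(n+1)+r$.

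The main obstacle is that permuting $C_j$ to put its maximum last, next to its minimum, might destroy distinctness of $\Delta(C_j)$ for the specific minimizer. So I must verify that the explicit minimizers classified in the paper admit, at every level, an ordering with the maximum at an end and all consecutive differences distinct. If that fails for a particular level, I would try first to keep the maximum at an end and accept any neighbor. Raising the maximum by $r$ keeps its difference the unique largest provided the original end difference was already the largest difference. The needed invariant is thus ``the end difference of $C_j$ is the maximum of $C_{j+1}$, placed at the end of $C_{j+1}$.'' I would establish this by induction on $n$ using the Walecki-type construction, where max and min are naturally adjacent at an end.
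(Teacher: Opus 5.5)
Your reduction of the boxed statements to the first sentence is fine. Your propagation through $C_1,C_2,\ldots$ is exactly Lemma~\ref{lem:Sm}, where $\{1,\ldots,m-1,L\}$ is arranged as $L,1,m-1,2,\ldots$.

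\textbf{The gap is at level $C_0$.} Unlike the later levels, $C_0$ cannot be reordered: its order \emph{is} the composition, and your scheme takes it to be a minimizer, then shifts it and raises one end. The raise needs an end part that exceeds its neighbour by the largest difference $n$, i.e.\ a normalized end entry $y_m\ge n$. By Lemma~\ref{lem:rigidity}, every end entry of a normalized minimizer is $0$, $1$ or $2$. A cruder estimate already gives $z_0\le 3$: an end block with end value $z_0$ and $r$ edges has excess at least $z_0+r-2$, while a minimizer has total excess at most $1$ and at most one block of excess $-1$. So for $n\ge3$ the largest part of a minimizer is always interior, and no minimizer admits your raise. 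From minimizers you therefore only reach $k\equiv a(n)\pmod{n+1}$.

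\textbf{The near-minimizer fallback does not close this.} Take any base with $n+1$ parts and the required end step. By the same estimate its big end block has excess at least $n-1$, so $B\ge 2n-2$. Its normalized sum is then at least $\lceil n(n+1)/4\rceil+n-1=M(n)+\lceil n/2\rceil-1$. Bases with more parts cost at least $a(n+1)\ge a(n)+2$. Hence for $n\ge5$ the value $k=a(n)+1$ is never produced. Already for $n=3$, a direct check shows that no normalized sequence of sum $5$ with distinct nonzero differences ends in an upward step equal to its largest difference. So $k=9$, which is realized by $(2,1,4,2)$, is missed.

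\textbf{The missing idea} is a cheap adjustment that keeps the end entries small. The paper raises the last \emph{two} entries of a tail $0,n,1$ by $s$. This turns the difference $n$ into $n+s$, leaves the difference $n-1$ unchanged, and costs $2s$. It removes the parity obstruction with a second base sequence whose sum is one larger:
\begin{itemize}
\item sums $M(n)$ and $M(n)+1$ for $n\equiv1,2\pmod4$;
\item sums $M(n)+1$ and $M(n)+2$, together with the optimal sequence for $t=0$, when $n\equiv0,3\pmod4$.
\end{itemize}
The cases $n\le3$ are treated by hand.
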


To state the enumeration result, call a nonnegative sequence $y=(y_0,\ldots,y_n)$ \emph{normalized} if $\min_i y_i=0$. Let $N(n)$ denote the number of normalized sequences whose differences are $1,\ldots,n$ in some order and whose sum is $M(n)$.

\begin{theorem}[Enumeration of minimizers]\label{thm:count}
The number $N(n)$ is also the number of compositions of $a(n)$ having depth $n$. Moreover,
\[
\boxed{
N(n)=
\begin{cases}
1,&n=0,\\
3(2q)!,&n=4q,\ q\ge1,\\
2(2q+1)!,&n=4q+1,\\
4(2q+1)!,&n=4q+2,\\
2(q+1)(2q+1)!,&n=4q+3.
\end{cases}}
\]
Thus the first values, beginning with $n=0$, are
\[
1,2,4,2,6,12,24,24,72,240,480,720,2160,\ldots.
\]
\end{theorem}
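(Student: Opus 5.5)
The proof splits into two parts. First, show that compositions of $a(n)$ with depth $n$ correspond bijectively to the sequences counted by $N(n)$. Second, evaluate $N(n)$ by classifying the extremal sequences. The bridge is the map $y\mapsto C_0=(y_0+1,\ldots,y_n+1)$, which I expect to come out of the equality analysis in the proof of Theorem~\ref{thm:min}.

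\textbf{Step 1: rigidity of minimizers.} Let $C_0$ be a composition of $a(n)$ of depth $n$ with $m+1$ parts.
\begin{enumerate}[label=(\roman*)]
\item Since $C_1$ then has depth $n-1$, it consists of $m\ge n$ distinct positive integers, and $\sum C_1\ge m(m+1)/2$.
\item Write $C_0=c+y$ with $c=\min C_0\ge1$ and $y$ normalized. Then $\sum C_0=(m+1)c+\sum y$.
\item From the argument giving the lower bound in Theorem~\ref{thm:min}, $\sum y$ is at least the minimum of a normalized sequence whose differences are some $m$ distinct positive integers. This minimum is increasing in $m$, and it is attained only when those integers are exactly $\{1,\ldots,m\}$.
\end{enumerate}
Equality at $a(n)=n+1+M(n)$ then forces $m=n$, $c=1$, difference set $\{1,\ldots,n\}$ and $\sum y=M(n)$.

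Conversely, any normalized $y$ with differences a permutation of $1,\ldots,n$ and sum $M(n)$ gives $C_0=1+y$, a composition of $a(n)$. Its first iteration is valid and yields the set $\{1,\ldots,n\}$. That set can be reordered in the alternating Walecki order, whose differences are $n-1,\ldots,1$, and iterating this shows $\{1,\ldots,n\}$ has depth $n-1$. Hence $C_0$ has depth $n$. The map $y\mapsto 1+y$ is injective, so the number of minimizing compositions equals $N(n)$.

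\textbf{Step 2: structure of extremal $y$.} I would redo the lower bound $\sum y\ge M(n)$ in a form whose equality cases are transparent.
\begin{itemize}
\item Each $y_i$ is at least the distance, measured by differences, from position $i$ to the nearest zero of $y$.
\item Split the path $y_0,\ldots,y_n$ at its zeros into monotone or unimodal runs.
\item On a run leaving a zero, the values are partial sums of the differences used. Such a sum is smallest when the smallest differences are placed nearest the zero.
\item Note that two equal adjacent steps, as in $0,a,0$, are forbidden.
\end{itemize}
The global bound then reduces to an assignment problem. We distribute the labels $1,\ldots,n$ among the "slots" at distance $1,2,\dots$ from zeros, with alternating signs.

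I expect the extremal configurations to have the following shape. Zeros are spread out, the large differences $n,n-1,\ldots$ are the steps directly adjacent to zeros, and the values climb and descend in a zigzag so that consecutive values nearly cancel. The parity of $n(n+1)/2$, the source of the ceiling in $M(n)$, decides whether one unit of slack exists. This is why the count depends on $n\bmod 4$: for $n\equiv 0,3\pmod 4$ the sum $n(n+1)/2$ is even, and for $n\equiv1,2\pmod 4$ it is odd. The $\floor{n/2}$ term should count the forced nonzero "peaks".

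\textbf{Step 3: counting, the main obstacle.} Once the canonical shape is established, the extremal $y$ should be determined by two pieces of data.
\begin{itemize}
\item An ordering of about $2q$ or $2q+1$ interchangeable blocks, each a pair $\{j,j'\}$ of differences forming a local excursion. Permuting blocks preserves the sum, which gives the factor $(2q)!$ or $(2q+1)!$.
\item A bounded number of boundary choices: reversal $y\mapsto(y_n,\ldots,y_0)$, and where the leftover unpaired difference(s) sit. These give the prefactors $3,2,4$.
\end{itemize}
For $n\equiv 3\pmod 4$ the leftover unit of slack can be inserted at any of $q+1$ positions, giving the factor $2(q+1)$.

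The hard part is proving completeness: every equality case has this block form. I would first try induction on $n$, removing the step of size $n$ (or the block containing it) from an extremal sequence. The aim is to show that the remainder is extremal for $n-2$ or $n-4$ after renormalizing, while tracking how $M$ changes. I would verify the small cases $n\le 8$ against the listed values $1,2,4,2,6,12,24,24,72$ to calibrate the boundary multiplicities before writing the induction.
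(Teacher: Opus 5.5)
\textbf{Step 1(iii) is false, and so is the bijection it is meant to give.} The lower-bound argument only yields $\sum y\ge\ceil{S/2}+\floor{m/2}$ with $S\ge m(m+1)/2$. When $m(m+1)/2$ is odd ($m\equiv1,2\pmod4$), the difference set $\{1,\ldots,m-1,m+1\}$, with $S=m(m+1)/2+1$, gives the same bound, and nothing in your argument (or in Lemma~\ref{lem:Bbound}) rules it out. For $m\equiv2\pmod4$, $m\ge6$, this bound is attained. Take $y=(0,1,4,0,2,7,0)$: it has differences $1,3,4,2,5,7$ and sum $14=M(6)$. So $(1,2,5,1,3,8,1)$ is a composition of $21=a(6)$. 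Its first difference set is $S_6(7)$, which has depth $5$ by Lemma~\ref{lem:Sm}, so the composition has depth $6$, yet $y$ is not counted by $N(6)$. Reversing and swapping the blocks $(0,1,4,0)$ and $(0,2,7,0)$ gives eight such compositions. For every $n\equiv2\pmod4$ with $n\ge6$, a deficient block, the block $(0,2,n+1,0)$ and ordinary four-edge blocks do the same. So the first sentence of the statement is false in these cases, and Step~1 cannot be completed. The paper's proof contains the same unjustified step (``Equality in the lower-bound proof forces $\{|y_i-y_{i-1}|\}=\{1,\ldots,n\}$''). The correspondence does hold for $n\not\equiv2\pmod4$ and for $n=2$. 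For $n\equiv0,3$ the ceiling is strict. For $n\equiv1$ the alternative would need total excess $-1$, i.e.\ one deficient three-edge block and otherwise only four-edge blocks of excess $0$, which forces $n\equiv3\pmod4$. Your own remark that the parity of $n(n+1)/2$ leaves one unit of slack is exactly the issue: that unit can be spent on $S$ instead of on $B$.

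\textbf{Steps 2--3 do not yet prove the formula for $N(n)$.} They describe an expected shape and defer completeness to an induction, but the key mechanism is missing. The paper proceeds as follows. It splits $y$ at its zeros and gives each block an \emph{excess}: its contribution to $B=2A-S=y_0+y_n+2\sum_i\min(y_{i-1},y_i)$ minus its number of edges. The total excess $B-n\in\{-1,0,1\}$ is fixed by $n\bmod4$. Lemma~\ref{lem:rigidity} then shows locally that only $(0,1,r+1,0)$, $(0,r+1,1,s+1,0)$, $(1,0)$, $(2,0)$ and $(1,r+1,0)$ can occur, with the exceptional candidates excluded by edge counts modulo $4$. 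The remaining differences then split uniquely into consecutive pairs, and each count is a product of pair-matchings, block orderings and orientations. In particular the basic unit is a four-edge excursion carrying two consecutive pairs. The factor $(2q)!$ arises as $\frac{(2q)!}{2^qq!}\cdot q!\cdot2^q$, not as orderings of $2q$ independent pair-excursions (an inner block with two edges is impossible). Your induction also does not close as stated. The block containing the step $n$ carries $\{n-1,n\}$ together with an arbitrary pair $\{s,s+1\}$, so deleting it leaves a difference set that is generally not $\{1,\ldots,n-4\}$. You would need an induction hypothesis for arbitrary unions of consecutive pairs, which is essentially the block classification.
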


\section{The lower bound}

The case $n=0$ is immediate: $(1)$ has depth $0$, so $a(0)=1$. Assume henceforth that $n\ge1$.

Suppose that
\[
x=(x_0,\ldots,x_m)
\]
has depth at least $n$. Since each iteration reduces the length by one, $m\ge n$. The first iteration is possible, so
\[
e_i=|x_i-x_{i-1}|,\qquad 1\le i\le m,
\]
are distinct positive integers.

Let
\[
c=\min_{0\le i\le m}x_i,
\qquad y_i=x_i-c,
\]
and put
\[
S=\sum_{i=1}^m e_i,
\qquad A=\sum_{i=0}^m y_i.
\]
Then $y_i\ge0$, at least one $y_i$ is zero, and
\[
S\ge1+2+\cdots+m=\frac{m(m+1)}2.
\]

For nonnegative integers $u,v$,
\[
u+v-|u-v|=2\min(u,v).
\]
Summing over all adjacent pairs gives
\begin{equation}\label{eq:Bidentity}
B:=2A-S
=y_0+y_m+2\sum_{i=1}^m\min(y_{i-1},y_i).
\end{equation}

\begin{lemma}\label{lem:Bbound}
For every such sequence,
\[
B\ge2\floor{\frac m2}.
\]
\end{lemma}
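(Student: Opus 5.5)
The plan is to prove Lemma~\ref{lem:Bbound} by a charging argument on the path $y_0,y_1,\ldots,y_m$ built on \eqref{eq:Bidentity}. We have $B=y_0+y_m+2\sum_i\min(y_{i-1},y_i)$, with every $y_i\ge0$ and some $y_i=0$. Call an edge $\{i-1,i\}$ a \emph{zero edge} if one endpoint is $0$, and a \emph{positive edge} otherwise. A positive edge contributes at least $2$ to $B$ and a zero edge contributes $0$. So if $P$ and $E_0$ are the numbers of positive and zero edges, then $m=P+E_0$, and it suffices to show
\[
y_0+y_m+2\sum_{\text{positive edges}}\min(y_{i-1},y_i)\;\ge\;2\floor{\frac m2}.
\]

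First I would extract the structural consequences of the first iteration being valid, i.e.\ of $e_1,\ldots,e_m$ being distinct and nonzero. (a) Adjacent entries are distinct, so no two zeros are adjacent. (b) For a zero edge, the difference equals the nonzero endpoint. Hence a nonzero entry cannot be adjacent to two zeros, since that would repeat a difference. More generally, all nonzero entries adjacent to zeros are pairwise distinct. In particular, two zeros are at distance at least $3$.

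Next I would cut the path at its zeros into maximal runs $R_1,\ldots,R_r$ of positive entries. An interior run has length $\ge2$, and each run contributes its internal edges to $P$. Each interior run is flanked by two zero edges and each end run by at most one. The required inequality then reduces to showing that each run ``pays'' for its flanking zero edges. For an end run of the path, the boundary term $y_0$ or $y_m$ is available. For an interior run, I need its internal minima to sum to at least about the number of its flanking zero edges, up to the global parity slack in $\floor{m/2}$.

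The naive bound $\min\ge1$ on each positive edge gives only $P\ge r-1$ against $E_0\le 2r$. This falls short exactly for runs of length $2$ between two zeros, so distinctness must be used globally. The key input is that the endpoint values of runs (the neighbours of zeros) are all distinct differences. Also, a run $0,u,\ldots,v,0$ has its internal edge differences distinct from $u$ and $v$.

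I would aim to prove, for each interior run with endpoints $u,v$, a per-run inequality of the form
\[
\sum_{\text{internal edges}}\min \;\ge\; \min(u,v)\quad\text{or a suitable variant.}
\]
The variant must use distinctness of the run's internal differences from $u,v$; the plain form fails without it, e.g.\ $5,1,4$, which only becomes invalid via the repeated difference $4$. Given such a bound, the values $\min(u_j,v_j)$ over the interior runs are distinct positive integers. Their sum is therefore at least $1+2+\cdots+r'$, which dominates the deficit of roughly one unit per interior run once $r'\ge 3$. The small cases $r'\le 2$ are then checked by hand, together with the parity bookkeeping that produces $\floor{m/2}$ rather than $m/2$.

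The main obstacle is this per-run inequality. If $\min(u,v)$ is too strong a target, I would fall back to showing that each interior run contributes at least $2$ to $\sum\min$ except for at most one run, using the distinct endpoint values. Alternatively, I would try an induction on $m$: delete an endpoint or contract a short run, and track how $B$ and $\floor{m/2}$ change.
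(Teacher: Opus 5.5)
\textbf{The primary route fails.} It depends on a per-run inequality that is false even for valid sequences. Take the run $0,N,1,N-2,0$ with $N\ge4$, for instance $0,5,1,3,0$. Its differences $N,N-1,N-3,N-2$ are distinct and nonzero. Yet its internal minima sum to $2$, while $\min(u,v)=N-2$. So no variant of $\sum\min\ge\min(u,v)$ that grows with $\min(u,v)$ can hold for runs of length $3$, and the triangular-sum step $1+2+\cdots+r'$ has nothing to work with.

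That step is also unnecessary. As you noticed, only interior runs of length $2$ fall short under the naive bound. A run $0,u,v,0$ contributes $2\min(u,v)$ against three owned edges. It is deficient, by exactly $1$, only when $\min(u,v)=1$, and then it contains an edge of difference $1$, so at most one such run exists. Every other interior run of length $\ell$ has $\sum\min\ge2$, hence $2\sum\min\ge(\ell-1)+2$. Every end run of length $\ell$ contributes at least $1+2(\ell-1)\ge\ell$. This is your fallback, and it is exactly the paper's block argument. It gives $B\ge m-1$, which settles odd $m$.

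\textbf{The even case is missing.} You leave it as ``parity bookkeeping,'' but for even $m$ you must rule out $B=m-1$. This needs an equality analysis and a second use of the difference $1$.

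If $B=m-1$, there is exactly one deficient run and every other block is tight. Tightness forces two things:
\begin{enumerate}
\item every interior run has length $3$ with both internal minima equal to $1$;
\item every end block is the single edge $(1,0)$.
\end{enumerate}
The paper then counts edges: $m=3+4s+e$, where $e\le2$ is the number of end blocks, so even $m$ forces $e=1$. Equivalently, $B\equiv y_0+y_m\pmod 2$ with $y_0,y_m\in\{0,1\}$ forces exactly one endpoint to equal $1$. That end block uses the difference $1$, as does the deficient run, which is a contradiction. Without this step you only have $B\ge m-1$, which is weaker than $2\floor{m/2}$ when $m$ is even.
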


\begin{proof}
Split the sequence at all zero entries.

Consider first an inner block between two consecutive zeros, with $r$ edges:
\[
0,z_1,\ldots,z_{r-1},0,
\qquad z_i>0.
\]
The cases $r=1$ and $r=2$ are impossible: the former gives a zero edge, and the latter repeats the same positive edge. Its contribution to $B$ is
\[
2\sum_{i=1}^{r-2}\min(z_i,z_{i+1})\ge2(r-2).
\]
For $r\ge4$ this is at least $r$. For $r=3$, the contribution is $2\min(z_1,z_2)$. It is at least $3$ unless $\min(z_1,z_2)=1$, in which case it equals $2$, one less than the number of edges. Such a deficient block contains an edge of difference $1$, so at most one deficient block can occur.

An end block with $r$ edges has, after possible reversal, the form
\[
z_0,z_1,\ldots,z_{r-1},0,
\qquad z_i>0.
\]
Its contribution to $B$ is
\[
z_0+2\sum_{i=1}^{r-1}\min(z_{i-1},z_i)
\ge1+2(r-1)=2r-1\ge r.
\]
Thus every block contributes at least its number of edges, except possibly one deficient three-edge block, which contributes one less. Hence
\[
B\ge m-1.
\]
If $m$ is odd, this is exactly $2\floor{m/2}$.

Suppose now that $m$ is even and that equality $B=m-1$ holds. Then there is exactly one deficient three-edge block, every other inner block has four edges and attains equality, and every end block has one edge and attains equality. The edge count is therefore a sum of one $3$, some $4$'s, and at most two $1$'s. Since $m$ is even, exactly one one-edge end block must occur. Equality for that end block forces it to be $(1,0)$ or $(0,1)$, so it uses edge difference $1$. The deficient three-edge block also uses difference $1$, contradicting distinctness. Therefore $B\ge m$ when $m$ is even, completing the proof.
\end{proof}

By Lemma~\ref{lem:Bbound},
\[
A=\frac{S+B}{2}
\ge\ceil{\frac S2}+\floor{\frac m2}
\ge\ceil{\frac{m(m+1)}4}+\floor{\frac m2}=M(m).
\]
Since $c\ge1$,
\[
\sum_{i=0}^m x_i=A+(m+1)c\ge M(m)+m+1.
\]
The right-hand side is increasing in $m$, and $m\ge n$, so
\[
\sum_{i=0}^m x_i\ge M(n)+n+1.
\]
This proves
\[
a(n)\ge n+1+M(n).
\]

\section{Optimal constructions}

We now construct, for each $n\ge1$, a normalized sequence
\[
y=(y_0,\ldots,y_n)
\]
whose consecutive absolute differences are exactly $1,\ldots,n$ and whose sum is $M(n)$. Adding $1$ to every entry then gives a composition of sum $n+1+M(n)$.

\subsection*{Case $n=4q$}
For $q\ge1$, set $y_0=0$, $y_n=2$, and for $1\le i<n$ define
\[
y_i=
\begin{cases}
i+2,&i\equiv0\pmod2,\\
1,&i\equiv1\pmod4,\\
0,&i\equiv3\pmod4.
\end{cases}
\]
Thus
\[
(0,1,4,0,6,1,8,0,10,1,\ldots,n,0,2).
\]
Its differences are
\[
1,3,4,6,5,7,8,10,9,11,12,\ldots,n,2,
\]
which are exactly $1,\ldots,n$. Direct summation gives
\[
\sum_i y_i=4q^2+3q=M(4q).
\]

\subsection*{Case $n=4q+1$}
Set $y_0=0$, and for $1\le i\le n$ define
\[
y_i=
\begin{cases}
i+1,&i\equiv0\pmod2,\\
1,&i\equiv1\pmod4,\\
0,&i\equiv3\pmod4.
\end{cases}
\]
The sequence begins
\[
(0,1,3,0,5,1,7,0,9,1,\ldots).
\]
Its differences are
\[
1,2,3,5,4,6,7,9,8,10,11,\ldots,
\]
and its sum is
\[
4q^2+5q+1=M(4q+1).
\]

\subsection*{Case $n=4q+2$}
For $0\le i\le n$, set
\[
y_i=
\begin{cases}
i+1,&i\equiv1\pmod2,\\
1,&i\equiv0\pmod4,\\
0,&i\equiv2\pmod4.
\end{cases}
\]
Thus the sequence begins
\[
(1,2,0,4,1,6,0,8,1,10,0,\ldots).
\]
Its differences are
\[
1,2,4,3,5,6,8,7,9,10,\ldots,
\]
and
\[
\sum_i y_i=4q^2+7q+3=M(4q+2).
\]

\subsection*{Case $n=4q+3$}
Use the same rule as in the case $n=4q+1$, truncated at an index congruent to $3$ modulo $4$. Its differences are $1,\ldots,n$, and
\[
\sum_i y_i=4q^2+9q+4=M(4q+3).
\]

It remains to verify the depth. For every $m\ge1$, arrange $1,\ldots,m$ in Walecki order
\[
1,m,2,m-1,3,m-2,\ldots.
\]
Its consecutive absolute differences are $m-1,m-2,\ldots,1$. Therefore, after the first iteration, recursively apply the Walecki ordering to the current set $\{1,\ldots,m\}$. The successive difference sets are
\[
\{1,\ldots,n\},\{1,\ldots,n-1\},\ldots,\{1,2\},\{1\}.
\]
The constructed composition has depth at least $n$, and since it has exactly $n+1$ parts, its depth is exactly $n$. This proves Theorem~\ref{thm:min}.

\section{Every larger sum is attainable}

We first record a flexible version of the Walecki construction.

\begin{lemma}\label{lem:Sm}
Let $m\ge1$ and $L\ge m$. The elements of
\[
S_m(L)=\{1,2,\ldots,m-1,L\}
\]
can be arranged as a sequence of depth $m-1$.
\end{lemma}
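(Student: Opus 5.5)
We need to arrange $\{1,\ldots,m-1,L\}$ (m elements) so that the first difference step yields the distinct values $1,\ldots,m-1$. After that, the Walecki recursion from the previous section takes over. A sequence of $m$ terms has depth at most $m-1$, so depth $m-1$ requires $m-1$ further iterations. If the first differences are exactly $\{1,\ldots,m-1\}$, the Walecki argument supplies the remaining $m-2$ iterations, because a set $\{1,\ldots,j\}$ reduces to $\{1,\ldots,j-1\}$ all the way down to $\{1\}$. The plan is therefore to find one good ordering of $S_m(L)$.

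\textbf{Case $L=m$.} Here $S_m(L)=\{1,\ldots,m\}$, and the Walecki order $1,m,2,m-1,\ldots$ has differences $m-1,\ldots,1$. This is exactly the statement already used in the paper.

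\textbf{Case $L>m$.} I would place $L$ at one end and follow it with a graceful arrangement of $\{1,\ldots,m-1\}$. The arrangement is chosen to end next to $L$ at an element $t$, and the differences among the other elements should be exactly $\{1,\ldots,m-1\}$ minus one value. Distinctness is the only thing the first step needs, so the target is this:
\begin{itemize}[label={}]
\item Order $\{1,\ldots,m-1\}$ as $w_1,\ldots,w_{m-1}$ with distinct differences $D$, a set of size $m-2$.
\item Arrange that $L-w_1$ is distinct from every element of $D$.
\end{itemize}
Since $L\ge m+1$ and $w_1\le m-1$, the difference $L-w_1$ is at least $2$. It is simplest to let the Walecki order of $\{1,\ldots,m-1\}$ start with $w_1=1$; then $D=\{1,\ldots,m-2\}$ and $L-1\ge m>m-2$, so the difference is automatically new.

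This gives first differences $\{1,\ldots,m-2,L-1\}$, which is $S_{m-1}(L-1)$ with $L-1\ge m-1$. That suggests \emph{induction on $m$} instead of forcing $\{1,\ldots,m-1\}$ in one step. Assume the lemma holds for $m-1$, for every $L'\ge m-1$. Write $L$ followed by the Walecki order of $\{1,\ldots,m-1\}$ beginning at $1$. The first step is valid and produces the set $S_{m-1}(L-1)$, where $L-1\ge m-1$. Rearranging this set by the induction hypothesis gives depth $m-2$ more, for a total of $m-1$. The case $L=m$ also fits this scheme: $L-1=m-1$, and the resulting set is $\{1,\ldots,m-1\}=S_{m-1}(m-1)$. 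So one uniform inductive step handles both cases.

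\textbf{Base cases.} For $m=1$ the set is $\{L\}$, which has depth $0$ trivially. For $m=2$ the pair $(1,L)$ with $L\ge2$ gives the single difference $L-1\ge1$, which is nonzero, so the depth is $1$.

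The main point to check carefully is the edge case $m=2$ within the inductive step: the Walecki order of $\{1\}$ is just $(1)$, so the sequence is $(L,1)$. The other thing to verify is that the differences of the Walecki order of $\{1,\ldots,m-1\}$ started at $1$ really are $m-2,\ldots,1$. Both are immediate, so no serious obstacle is expected.
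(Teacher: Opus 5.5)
Your final inductive argument is correct and is the paper's proof. The arrangement $L,1,m-1,2,m-2,\ldots$ has differences $L-1,m-2,\ldots,1$, i.e.\ the set $S_{m-1}(L-1)$ with $L-1\ge m-1$, and your induction on $m$ just formalizes the paper's ``the argument can be repeated.'' Your opening plan of making the first differences exactly $\{1,\ldots,m-1\}$ cannot work once $L>2m-2$, but you rightly abandon it, and your separate case $L=m$ is already covered by the uniform inductive step.
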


\begin{proof}
Arrange them as
\[
L,1,m-1,2,m-2,3,\ldots.
\]
The consecutive absolute differences are
\[
L-1,m-2,m-3,\ldots,1,
\]
which form $S_{m-1}(L-1)$. Since $L-1\ge m-1$, the argument can be repeated. After $m-1$ steps one reaches a one-term sequence. No sequence of $m$ entries can have greater depth.
\end{proof}

Write $k=a(n)+t$, where $t\ge0$. The cases $n=0,1,2,3$ can be handled directly. For $n=0$ use $(1+t)$; for $n=1,2$ use, respectively,
\[
(1,2+t),\qquad (2,1,3+t).
\]
For $n=3$, the compositions obtained from
\[
(0,1,3,0),\quad (1,0,3,1),\quad (1,2,0,3+s)\quad(s=t-2\ge0)
\]
by adding $1$ to all four entries cover $t=0$, $t=1$, and $t\ge2$, respectively.

Assume now that $n\ge4$. In each residue class we use normalized sequences whose last three entries are
\[
0,n,1.
\]
If the last two entries are both increased by $s\ge0$, their sum increases by $2s$, while the difference set becomes
\[
\{1,2,\ldots,n-1,n+s\}=S_n(n+s).
\]
Lemma~\ref{lem:Sm} then gives at least $n-1$ further iterations after the first one.

The required base sequences are listed below. In each row, the displayed pattern continues in the evident period-$4$ manner and ends with $0,n,1$. Their differences are $1,\ldots,n$ and the indicated sums follow by direct calculation.

\begin{center}
\begin{tabular}{cclc}
\toprule
$n$ & base & normalized sequence & sum\\
\midrule
$4q+1$ & $Y_0$ & $(0,1,3,0,5,1,7,0,\ldots,n,1)$ & $M(n)$\\
        & $Y_1$ & $(0,2,3,0,5,1,7,0,\ldots,n,1)$ & $M(n)+1$\\[1mm]
$4q+2$ & $Y_0$ & $(0,2,1,4,0,6,1,8,0,\ldots,n,1)$ & $M(n)$\\
        & $Y_1$ & $(0,1,5,2,0,6,1,8,0,\ldots,n,1)$ & $M(n)+1$\\[1mm]
$4q$   & $Y_1$ & $(1,2,0,4,1,6,0,8,1,\ldots,n,1)$ & $M(n)+1$\\
        & $Y_2$ & $(3,1,0,4,1,6,0,8,1,\ldots,n,1)$ & $M(n)+2$\\[1mm]
$4q+3$ & $Y_1$ & $(1,0,3,1,5,0,7,1,9,0,\ldots,n,1)$ & $M(n)+1$\\
        & $Y_2$ & $(3,0,2,1,5,0,7,1,9,0,\ldots,n,1)$ & $M(n)+2$\\
\bottomrule
\end{tabular}
\end{center}

For $n\equiv1,2\pmod4$, use the base of sum $M(n)$ when $t$ is even and the base of sum $M(n)+1$ when $t$ is odd, then increase the last two entries by $\floor{t/2}$. For $n\equiv0,3\pmod4$, the case $t=0$ is supplied by the optimal construction of the preceding section. For $t>0$, use the base of sum $M(n)+1$ when $t$ is odd and the base of sum $M(n)+2$ when $t$ is even, and increase the final two entries by $(t-1)/2$ or $(t-2)/2$, respectively. Adding $1$ to all $n+1$ entries gives a composition of sum $a(n)+t=k$ and depth at least $n$.

Conversely, the lower-bound proof applies to every composition of depth at least $n$, so such a composition must have sum at least $a(n)$. This proves Theorem~\ref{thm:nogaps}.

\section{Classification and enumeration of minimizers}

Let $y=(y_0,\ldots,y_n)$ be normalized and minimizing. Equality in the lower-bound proof forces
\[
\{|y_i-y_{i-1}|:1\le i\le n\}=\{1,\ldots,n\}
\]
and
\[
B=2M(n)-\frac{n(n+1)}2=
\begin{cases}
n,&n\equiv0,1\pmod4,\\
n+1,&n\equiv2\pmod4,\\
n-1,&n\equiv3\pmod4.
\end{cases}
\]
Thus the total block excess
\[
\delta:=B-n
\]
belongs to $\{-1,0,1\}$; more precisely, $\delta=-1,0,1$ for $n\equiv3$, $n\equiv0,1$, and $n\equiv2\pmod4$, respectively.

A composition of $a(n)$ and depth $n$ must attain equality at every step of the lower-bound proof. Hence it has $n+1$ parts, its minimum part is $1$, and after subtracting $1$ its differences are exactly $1,\ldots,n$ and its sum is $M(n)$. Conversely, adding $1$ to any sequence counted by $N(n)$ gives a composition of $a(n)$ and depth $n$. This proves the first assertion of Theorem~\ref{thm:count}.

We now refine the block analysis used in Lemma~\ref{lem:Bbound}. The \emph{excess} of a block is its contribution to $B$ minus its number of edges.

\begin{lemma}[Local rigidity]\label{lem:rigidity}
In a minimizing sequence, every block is one of the following types, up to reversal:
\begin{enumerate}[label=\textup{(\roman*)}]
\item a deficient inner three-edge block
\[
(0,1,r+1,0),
\]
with difference set $\{1,r,r+1\}$ and excess $-1$;
\item an inner four-edge block
\[
(0,r+1,1,s+1,0),
\]
with difference set $\{r,r+1,s,s+1\}$ and excess $0$;
\item a one-edge end block $(1,0)$, of excess $0$;
\item a one-edge end block $(2,0)$, of excess $1$;
\item a two-edge end block
\[
(1,r+1,0),
\]
with difference set $\{r,r+1\}$ and excess $1$.
\end{enumerate}
At most one block of type \textup{(i)} can occur.
\end{lemma}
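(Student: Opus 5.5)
The plan is to use the block decomposition from the proof of Lemma~\ref{lem:Bbound} together with the exact total excess $\delta=B-n\in\{-1,0,1\}$. Write the excess of each block as (contribution) $-$ (edges); then $\sum_{\text{blocks}}\text{excess}=\delta\le 1$. The first step is to record lower bounds on the excess of each block type. For an inner block with $r$ edges the excess is at least $2(r-2)-r=r-4$. For an end block with $r$ edges it is at least $(2r-1)-r=r-1$. Only a three-edge inner block with $\min(z_1,z_2)=1$ has negative excess, namely $-1$, and as in Lemma~\ref{lem:Bbound} at most one such block exists because it uses difference $1$. Hence all other blocks have excess $\ge0$, and their total excess is at most $\delta+1\le 2$. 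Since the minimizing sequence has no slack, the refinement must be read off from these budgets.

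The second step bounds the sizes of the blocks.
\begin{itemize}[label=\textbullet]
\item An inner block with $r\ge5$ edges has excess $\ge r-4\ge1$. More precisely it is $2\sum\min(z_i,z_{i+1})-r\ge 2(r-2)-r$, which is $1$ for $r=5$ and $2$ for $r=6$. For these cases I will show that reaching the bound forces all consecutive minima to equal $1$. This in turn forces the pattern $1,\ast,1,\ast,1$ among the interior entries, so two edges of difference $1$ would appear, a contradiction. Hence the excess exceeds $2$, or at any rate exceeds the budget once combined with the parity argument below. I will handle $r\ge5$ by noting that the interior values $z_1,\dots,z_{r-1}$ can contain at most a bounded number of $1$'s. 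The reason is that each $1$ adjacent to a value $v$ forces the differences $v-1$, and two $1$'s adjacent to the same $v$ repeat a difference. So most minima are $\ge2$, and the excess grows roughly like $r$, exceeding $2$.
\item An end block with $r\ge3$ edges has excess $\ge2$. Equality would need $z_0=1$ and all minima equal to $1$, which again repeats difference $1$, so $r\ge3$ is excluded.
\end{itemize}

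The third step classifies the small cases exactly.
\begin{itemize}[label=\textbullet]
\item \emph{Inner $r=3$.} Excess $2\min(z_1,z_2)-3\le1$ forces $\min=1$ or $\min=2$. The case $\min=2$ gives excess $+1$. I must rule it out, or show it cannot coexist, using the budget: $\delta\le1$ with no deficient block. I expect a parity or distinctness argument here, since $(0,2,s,0)$ has differences $2,s-2,s$, and with $\delta$ fixed by $n\bmod4$ this should be incompatible. This is the delicate point. The case $\min=1$ gives type (i), with differences $1,r,r+1$.
\item \emph{Inner $r=4$.} The excess is $2(\min(z_1,z_2)+\min(z_2,z_3))-4$. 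Zero excess forces both minima to be $1$, hence $z_2=1$, which gives type (ii). Positive excess of at least $2$ would exceed the budget except in borderline combinations, which I will exclude similarly.
\item \emph{End $r=1$.} Here $z_0-1\in\{0,1\}$, giving types (iii) and (iv).
\item \emph{End $r=2$.} The excess is $z_0+2\min(z_0,z_1)-2$. Value $1$ forces $z_0=1$, giving type (v). Larger values exceed the budget.
\end{itemize}

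The main obstacle is the global budget argument: excluding blocks of excess $2$ (and excess-$1$ inner blocks such as $(0,2,s,0)$) requires combining the exact value of $\delta$ with the edge count $n=\sum r$ modulo $4$ and with the scarcity of difference $1$ and $2$. I would first try the edge-count-parity approach from the even case of Lemma~\ref{lem:Bbound}. Here I would enumerate which multisets of block lengths sum to $n$ with total excess $\delta$, and then check that each non-listed configuration forces a repeated small difference.
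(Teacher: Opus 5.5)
The architecture of your proposal matches the paper's. You split at zeros and observe that only the deficient block has negative excess, so the other blocks have total excess at most $\delta+1\le 2$. You classify small blocks locally, and then plan to exclude the leftovers globally. However, the proposal stops exactly where the lemma's content lies, so there is a genuine gap.

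Because the budget is $2$, not $1$, the local analysis leaves three candidates that are not on the list:
\begin{enumerate}
\item the inner block $(0,2,s,0)$, of excess $1$;
\item the inner four-edge block $(0,1,u,2,0)$ with $u\ge5$, of excess $2$ (your case $\min(z_1,z_2)+\min(z_2,z_3)=3$);
\item the end block $(3,0)$, of excess $2$.
\end{enumerate}
For the first two you only say you ``expect'' an incompatibility. The third you drop altogether by writing $z_0-1\in\{0,1\}$.

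Several local steps are also misstated:
\begin{enumerate}
\item The end block $(2,1,0)$ has excess $2$, which does \emph{not} exceed the budget; it must be excluded because it repeats difference $1$.
\item In the inner $r=5,6$ cases and the three-edge end case, alternating $1$'s do not in general give two edges of difference $1$. For interior entries $(1,u,1,v)$ the repeated difference is $u-1$, and for $(1,z_1,1,0)$ it is $z_1-1$.
\item The zero-excess four-edge case needs an extra observation: $z_2>1$ would force $z_1=z_3=1$ and repeat $z_2-1$.
\item The claim that a block contains boundedly many $1$'s is false. It is also unnecessary, since the excess is at least $r-4\ge3$ for $r\ge7$.
\end{enumerate}

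Here are the missing global arguments.

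For $(0,1,u,2,0)$: its excess is $2$, so a deficient block must also be present. Both blocks contain difference $1$, which is a contradiction.

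For $(3,0)$: again a deficient block is forced, and every other block has excess $0$. The deficient block rules out $(1,0)$, so the edge count is $n=1+3+4t\equiv0\pmod4$. But total excess $1$ requires $n\equiv2\pmod4$.

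For $p\ge1$ blocks $(0,2,s,0)$: let $D\in\{0,1\}$ indicate the deficient block. Let $h_1,h_2$ count the $(2,0)$ and two-edge end blocks, and let $e$ count the $(1,0)$ blocks. You need both
\[
p+h_1+h_2-D=\delta \quad\text{and}\quad n\equiv3p+3D+h_1+2h_2+e\pmod4 .
\]
The mod-$4$ count alone does not finish this. For example, $D=1$, $\delta=0$, $p=1$, $e=2$ gives $n\equiv0\pmod4$, which is consistent with $\delta=0$. What closes the case analysis is the distinctness constraint that $D=1$ forces $e=0$, since both use difference $1$. With that constraint, each of the finitely many solutions gives a residue of $n$ incompatible with $\delta$.

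Without these three exclusions, your proposal is an outline of the paper's proof rather than a proof of the lemma.
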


\begin{proof}
All inner blocks other than a deficient three-edge block have nonnegative excess, and all end blocks have nonnegative excess. A deficient block has excess $-1$ and contains difference $1$, so at most one can occur. Since the total excess is at most $1$, every individual block has excess at most $2$.

Consider an inner block
\[
(0,z_1,\ldots,z_{r-1},0),\qquad z_i>0,
\]
and let $b$ be its contribution to $B$. Then
\[
b=2\sum_{i=1}^{r-2}\min(z_i,z_{i+1})\ge2(r-2).
\]
The cases $r=1,2$ are impossible.

For $r=3$, the excess is
\[
2\min(z_1,z_2)-3.
\]
If the minimum is $1$, then, up to reversal, the block is
\[
(0,1,r+1,0),
\]
and has type~(i). If the minimum is $2$, the block has excess $1$; we temporarily call this an exceptional three-edge block. A larger minimum gives excess at least $3$.

For $r=4$, put
\[
a=\min(z_1,z_2),\qquad c=\min(z_2,z_3).
\]
The excess is $2(a+c)-4$. If $a+c=2$, then $a=c=1$. If $z_2>1$, necessarily $z_1=z_3=1$, and the two middle edge differences are equal. Hence $z_2=1$, which gives type~(ii). If $a+c=3$, then, up to reversal, $a=1$ and $c=2$. Thus $z_1=1$ and one of $z_2,z_3$ equals $2$. The choice $z_2=2$ repeats the first edge difference; therefore $z_3=2$ and the block has the form
\[
(0,1,u,2,0),\qquad u\ge5.
\]
It has excess $2$ and contains edge difference $1$. We call it an exceptional four-edge block. If $a+c\ge4$, the excess is at least $4$.

For $r=5$, excess at most $2$ would force $b=6$. Then
\[
\min(z_1,z_2)=\min(z_2,z_3)=\min(z_3,z_4)=1.
\]
No adjacent internal entries can both equal $1$, so the internal entries alternate as
\[
(1,u,1,v)\qquad\text{or}\qquad(u,1,v,1),
\]
with $u,v>1$. In the first case the two edges incident with $u$ have the same difference $u-1$; in the second case the two edges incident with $v$ have the same difference $v-1$. Both are impossible.

For $r=6$, excess at most $2$ would force equality in $b\ge8$, so every pair of consecutive internal entries would contain a $1$. Again the internal entries must alternate. They have one of the forms
\[
(1,u,1,v,1)\qquad\text{or}\qquad(u,1,v,1,w),
\]
and in either case two edges incident with the same internal entry have equal differences. Thus this case is impossible. For $r\ge7$, the lower bound gives excess at least $r-4\ge3$.

Now consider an end block
\[
(z_0,z_1,\ldots,z_{r-1},0),\qquad z_i>0.
\]
Its contribution is
\[
b=z_0+2\sum_{i=1}^{r-1}\min(z_{i-1},z_i)\ge2r-1.
\]
For $r=1$, excess at most $2$ allows $(1,0)$, $(2,0)$, and the temporary exceptional block $(3,0)$, of excesses $0,1,2$. For $r=2$, the only admissible value $b\le4$ is $b=3$, which forces type~(v); the alternative $b=4$ would give $(2,1,0)$ and repeat difference $1$. For $r=3$, excess at most $2$ would force $b=5$. Then
\[
z_0=1,\qquad \min(z_0,z_1)=1,\qquad \min(z_1,z_2)=1.
\]
Since $z_1\ne1$, we obtain $z_2=1$, and the first two edge differences are equal. For $r\ge4$, the excess is at least $r-1\ge3$.

It remains to exclude the three temporary exceptional types globally.

An exceptional four-edge block has excess $2$ and contains difference $1$. Since $\delta\le1$, it would have to coexist with a deficient block of excess $-1$, which also contains difference $1$. This is impossible.

If $(3,0)$ occurred, its excess $2$ would again force one deficient block and no other positive-excess block. The deficient block prevents any end block $(1,0)$, because both use difference $1$. Hence, for some $t\ge0$, the total number of edges would be
\[
n=1+3+4t=4(t+1),
\]
one edge from $(3,0)$, three from the deficient block, and $4t$ from ordinary inner blocks. Thus $n\equiv0\pmod4$, whereas the total excess $2-1=1$ requires $n\equiv2\pmod4$, a contradiction.

Finally suppose that $p\ge1$ exceptional three-edge blocks occur. Let $D\in\{0,1\}$ record the deficient block, and let $h_1,h_2\in\{0,1,2\}$ count positive-excess end blocks with one and two edges, respectively. Let $e$ count zero-excess one-edge end blocks. The excess equation and the edge count modulo $4$ are
\[
p+h_1+h_2-D=\delta,
\qquad
n\equiv3p+3D+h_1+2h_2+e\pmod4.
\]
If $D=1$, then $e=0$, because the deficient block and every zero-excess end block both use difference $1$. The complete residue check is:
\[
\begin{array}{c@{\qquad}c@{\qquad}c@{\qquad}c@{\qquad}c}
D & \delta & (p,h_1,h_2) & e & n\pmod4\\
\hline
0 & 1 & (1,0,0) & 0,1,2 & 3,0,1\\
1 & 0 & (1,0,0) & 0 & 2\\
1 & 1 & (2,0,0) & 0 & 1\\
1 & 1 & (1,1,0) & 0 & 3\\
1 & 1 & (1,0,1) & 0 & 0
\end{array}
\]
In the first row the required residue is $2$, since $\delta=1$. In the second row the required residue is $0$ or $1$, since $\delta=0$. In the last three rows the required residue is again $2$. No row is compatible. Finally, $\delta=-1$ would give $p+h_1+h_2=0$ when $D=1$, contradicting $p\ge1$, and is impossible when $D=0$. Thus no exceptional three-edge block occurs.

Only types~(i)--(v) remain.
\end{proof}

The edge counts and excesses in Lemma~\ref{lem:rigidity} leave only the global structures listed below. Each ordinary four-edge block uses two pairs of consecutive differences. Once the exceptional differences have been assigned, the remaining differences must be partitioned into consecutive pairs. Such a partition, when it exists, is unique: the smallest remaining difference must be paired with its successor, and induction completes the partition. This observation both forces the adjacent pairs displayed below and shows that the counting arguments neither omit nor multiply count any minimizing sequence.

\subsection*{$n=4q+3$}
Here the total excess is $-1$. Therefore one deficient three-edge block is necessary, and no block of positive excess can occur. The remaining $4q$ edges must consequently form exactly $q$ four-edge blocks, with no end blocks. The numbers $2,\ldots,4q+3$ are forced into the $2q+1$ adjacent pairs
\[
\{2,3\},\{4,5\},\ldots,\{4q+2,4q+3\}.
\]
Choose one pair for the deficient block, pair the remaining $2q$ pairs into $q$ unordered pairs, order the resulting $q+1$ blocks, and orient every block. The number of partitions of $2q$ labeled pairs into $q$ unordered two-element groups is
\[
\frac{(2q)!}{2^q q!}:
\]
order the $2q$ objects, split them into consecutive pairs, then divide by $2^q$ for the orders within the pairs and by $q!$ for the order of the groups. Hence
\[
N(4q+3)
=(2q+1)\frac{(2q)!}{2^q q!}(q+1)!2^{q+1}
=2(q+1)(2q+1)!.
\]

\subsection*{$n=4q$}
Here the total excess is $0$. Without a deficient block, all blocks have excess $0$; the edge count forces exactly $q$ four-edge blocks and no end blocks. Pairing
\[
\{1,2\},\{3,4\},\ldots,\{4q-1,4q\}
\]
into blocks, ordering, and orienting them gives
\[
(2q)!.
\]

With a deficient block of excess $-1$, exactly one block of excess $1$ is needed. A two-edge end block would leave an edge count congruent to $1$ modulo $4$, whereas a one-edge end block leaves a multiple of $4$. Hence the compensating block must be $(2,0)$ or its reversal, followed by $q-1$ four-edge blocks. After reserving differences $1$ and $2$, choose one of the $2q-1$ pairs
\[
\{3,4\},\ldots,\{4q-1,4q\}
\]
for the deficient block. Pair the rest, order and orient the inner blocks, and choose the end. This gives
\[
2(2q)!.
\]
Therefore
\[
N(4q)=3(2q)!.
\]

\subsection*{$n=4q+1$}
Here the total excess is $0$ and the edge count is odd. Without a deficient block, one excess-$0$ one-edge end block is necessary, and the remaining $4q$ edges form $q$ four-edge blocks. This structure gives
\[
2(2q)!.
\]

With a deficient block, one excess-$1$ block is required. A one-edge positive end block gives the wrong edge-count residue, so the positive block must be a two-edge end block, and the rest are $q-1$ four-edge blocks. Assigning two distinct pairs from
\[
\{2,3\},\{4,5\},\ldots,\{4q,4q+1\}
\]
to the deficient and end blocks, then grouping, ordering, orienting, and choosing the end gives
\[
4q(2q)!.
\]
For $q=0$ only the first structure occurs. Thus
\[
N(4q+1)=2(2q)!+4q(2q)!=2(2q+1)!.
\]

\subsection*{$n=4q+2$}
Here the total excess is $1$. Solving simultaneously for total excess $1$ and total edge count $4q+2$ gives exactly three possibilities:
\begin{enumerate}[label=\textup{(\alph*)}]
\item one two-edge end block and $q$ four-edge blocks;
\item two one-edge end blocks, with endpoint values $1$ and $2$, and $q$ four-edge blocks;
\item one deficient block, one one-edge end block of value $2$, one two-edge end block, and $q-1$ four-edge blocks.
\end{enumerate}
For (a), choose one pair from
\[
\{1,2\},\{3,4\},\ldots,\{4q+1,4q+2\}
\]
for the end block, then group, order, and orient the rest. This contributes $2(2q+1)!$. In (b), differences $1$ and $2$ are assigned to the two ends in either order; the remaining adjacent pairs contribute $2(2q)!$. In (c), differences $1$ and $2$ belong to the deficient and one-edge end blocks, while two distinct pairs from
\[
\{3,4\},\{5,6\},\ldots,\{4q+1,4q+2\}
\]
are assigned to the deficient and two-edge end blocks. This contributes $4q(2q)!$; for $q=0$ this structure is absent. Therefore
\[
N(4q+2)
=2(2q+1)!+2(2q)!+4q(2q)!
=4(2q+1)!.
\]
This proves Theorem~\ref{thm:count}.

\begin{remark}[Independent verification]\label{rem:verification}
A dynamic-programming enumeration, independent of the block classification, was used for $0\le n\le10$. It starts each walk at $0$, chooses every ordering of the edge lengths $1,\ldots,n$ and every sign assignment, and records the current endpoint, running minimum, and coordinate sum. Translation by the negative running minimum gives a unique normalized sequence, and conversely every normalized sequence determines a unique signed edge ordering. Thus no quotient by translations or reversals is taken in the count. The program counts exactly those translated walks whose coordinate sum is $M(n)$ and obtains
\[
N(n)=1,2,4,2,6,12,24,24,72,240,480,
\]
in agreement with Theorem~\ref{thm:count}. A direct brute-force implementation independently reproduces the same counts for $0\le n\le8$. Executable source code is included as the ancillary files \texttt{verify\_counts.py} and \texttt{verify\_counts\_bruteforce.py}.
\end{remark}

\section{Generating function and asymptotics}

The sequence $a(n)$ has ordinary generating function
\[
\sum_{n\ge0}a(n)x^n
=\frac{1+x+x^2-x^3+x^4-x^5}{(1-x)^2(1-x^4)}.
\]
Since
\[
a(n)=\frac{n^2}{4}+\frac{7n}{4}+O(1),
\]
Theorem~\ref{thm:nogaps} yields
\[
\boxed{d(k)=2\sqrt{k}-\frac72+O(1).}
\]
The error cannot in general be improved to $O(k^{-1/2})$, because $d(k)$ is constant on each interval $a(n)\le k<a(n+1)$, whose length is of order $n$. At the threshold values themselves one has the sharper relation
\[
n=2\sqrt{a(n)}-\frac72+O(n^{-1}).
\]

\section{Further questions}

Several natural variants remain open.
\begin{enumerate}
\item What is the maximum depth if the differences may not be permuted between iterations?
\item How many compositions of a general integer $k>a(n)$ attain the maximum depth $d(k)$?
\item What changes if repeated differences or zero differences are allowed?
\item Are there analogous formulas for cyclic consecutive differences?
\end{enumerate}

\section*{Code availability}
The dynamic-programming verification code, the independent brute-force count, the construction audit, the block-structure audit, and their recorded outputs are included with the arXiv source as ancillary files. The proofs of the three main theorems do not depend on these computations.

\section*{Acknowledgments}
The author used computational and language-model tools during the exploratory and editorial phases of this work. All mathematical statements, proofs, and conclusions remain the responsibility of the author.

\end{document}